\documentclass[11pt]{amsart}

\usepackage{amscd}
\usepackage{amsmath, amssymb}
\usepackage{amsfonts}
\newcommand{\de}{\partial}
\newcommand{\db}{\overline{\partial}}

\newcommand{\ddbar}{i \partial \overline{\partial}}

\newcommand{\ov}[1]{\overline{#1}}

\newcommand{\ti}[1]{\tilde{#1}}
\newcommand{\vp}{\varphi}
\newcommand{\vol}{\mathrm{Vol}}

\newcommand{\ve}{\varepsilon}

\numberwithin{equation}{section}
\renewcommand{\leq}{\leqslant}
\renewcommand{\geq}{\geqslant}

\begin{document}
\newtheorem{claim}{Claim}
\newtheorem{theorem}{Theorem}[section]
\newtheorem{conjecture}[theorem]{Conjecture}
\newtheorem{lemma}[theorem]{Lemma}
\newtheorem{corollary}[theorem]{Corollary}
\newtheorem{proposition}[theorem]{Proposition}
\newtheorem{question}[theorem]{question}
\newtheorem{conj}[theorem]{Conjecture}
\newtheorem{defn}[theorem]{Definition}
\theoremstyle{definition}
\newtheorem{remark}[theorem]{Remark}
     \newtheorem{hypothesis}[theorem]{Hypothesis}

\newenvironment{example}[1][Example]{\addtocounter{remark}{1} \begin{trivlist}
\item[\hskip
\labelsep {\bfseries #1  \thesection.\theremark}]}{\end{trivlist}}

\title{Transcendental Morse inequality on K\"ahler manifolds}

    \author{Valentino Tosatti}

\address{Courant Institute School of Mathematics, Computing, and Data Science, New York University, 251 Mercer St, New York, NY 10012}
\email{tosatti@cims.nyu.edu}

\begin{abstract}
We prove the transcendental Morse inequality conjecture of Boucksom-Demailly-P\u{a}un-Peternell. Among the corollaries of this result, this proves orthogonality of divisorial Zariski decompositions, shows that the pseudoeffective and movable cones are dual, and establishes the differentiability of the volume function on the big cone, on all compact K\"ahler manifolds.
\end{abstract}

\maketitle

\section{Introduction}

\subsection{The volume function} The volume $\vol([\alpha])\in\mathbb{R}_{\geq 0}$ of a $(1,1)$ cohomology class $[\alpha]\in H^{1,1}(X,\mathbb{R})$ on a compact K\"ahler manifold $X^n$ is a fundamental quantity, introduced by Boucksom \cite{Bou}, which specializes to the volume of a holomorphic line bundle $L$ in the case when $[\alpha]=c_1(L)$:
\begin{equation}
\vol(L)=\limsup_{k\to\infty} \frac{h^0(X,kL)}{k^n/n!}.
\end{equation}
For a general $(1,1)$-class $[\alpha]$, its volume is set to be zero if the class is not pseudoeffective (i.e. it does not contain any closed positive $(1,1)$-current), and otherwise it is defined by
\begin{equation}
\vol([\alpha])=\sup_{T\in[\alpha], T\geq 0}\int_X T_{\rm ac}^n,
\end{equation}
where the supremum ranges over all closed positive $(1,1)$-currents in the class $[\alpha]$, and $T_{\rm ac}$ denotes the absolutely continuous part in the Lebesgue decomposition of $T$. Boucksom \cite{Bou} shows that the resulting function $\vol:H^{1,1}(X,\mathbb{R})\to\mathbb{R}_{\geq 0}$ is continuous, and that classes with strictly positive volume (``big'') are characterized by the fact that they contain a K\"ahler current (a closed positive current which dominates some K\"ahler form).

\subsection{Transcendental Morse inequality}
The main result of this paper is a resolution of the following ``transcendental Morse inequality'' conjecture about the volume function, which was formulated by Boucksom-Demailly-P\u{a}un-Peternell \cite{BDPP} in 2004, see also \cite{Dem3}:
\begin{theorem}\label{main}
Let $X^n$ be a compact K\"ahler manifold, and let $[\alpha],[\beta]\in H^{1,1}(X,\mathbb{R})$ be two nef classes. Then we have
\begin{equation}\label{vol}
\vol([\alpha]-[\beta])\geq \int_X\alpha^n-n\int_X \alpha^{n-1}\wedge \beta.
\end{equation}
\end{theorem}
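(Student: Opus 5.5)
By homogeneity, perturbation and continuity of $\vol$ (Boucksom), I would first reduce to the case where $[\alpha]$ and $[\beta]$ are K\"ahler classes. Replacing $\alpha,\beta$ by $\alpha+\ve\omega$, $\beta+\ve\omega$ changes both sides continuously. So I fix K\"ahler forms $\alpha,\beta$ and assume
\[
\int_X\alpha^n-n\int_X\alpha^{n-1}\wedge\beta>0,
\]
since otherwise there is nothing to prove. The goal is to produce, for each $\delta>0$, a closed positive current $T\in[\alpha-\beta]$ with $\int_X T_{\rm ac}^n\geq \int_X\alpha^n-n\int_X\alpha^{n-1}\wedge\beta-\delta$.

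\textbf{Step 1: a Monge--Amp\`ere construction.} Following the strategy of Xiao and Popovici, I would solve a family of complex Monge--Amp\`ere equations (Yau's theorem) of the form
\[
(\alpha+\ddbar u)^{n}=c\,\alpha^{n-1}\wedge\beta\cdot(\text{normalized density}),
\]
or more directly
\[
(\alpha-\beta+\ddbar\vp_\ve)^n=\text{prescribed volume form}
\]
in the envelope/quasi-psh sense. The aim is to show that the class $[\alpha-\beta]$ contains currents whose absolutely continuous parts have large mass.

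The cleaner route is to consider the psh envelope
\[
V=\sup\{\vp\ \text{$(\alpha-\beta)$-psh},\ \vp\leq 0\}.
\]
Its Monge--Amp\`ere measure is supported on the contact set $D=\{V=0\}$, and on $D$ it equals $\mathbf{1}_D(\alpha-\beta)^n$; this uses $C^{1,1}$ regularity of envelopes (Berman, Chu--Tosatti--Weinkove). Whenever $[\alpha-\beta]$ is big, this gives
\[
\vol([\alpha-\beta])=\int_D(\alpha-\beta)^n .
\]
It then suffices to prove
\[
\int_D(\alpha-\beta)^n\geq\int_X\alpha^n-n\int_X\alpha^{n-1}\wedge\beta .
\]
On $D$ the form $\alpha-\beta$ is semipositive. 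Pointwise, at any point where $\alpha-\beta\ge0$, the elementary inequality $(\alpha-\beta)^n\geq\alpha^n-n\alpha^{n-1}\wedge\beta$ holds: diagonalize $\beta$ with respect to $\alpha$, with eigenvalues $0\le\lambda_i\le1$, and use $\prod(1-\lambda_i)\geq1-\sum\lambda_i$. This leaves the complement $X\setminus D$, where one needs
\[
\int_{X\setminus D}\bigl(\alpha^n-n\alpha^{n-1}\wedge\beta\bigr)\le 0 .
\]
This is a global statement and cannot be obtained pointwise.

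\textbf{Step 2 (main obstacle).} To handle $X\setminus D$, I would vary the reference: replace $\beta$ by $\beta+\ddbar\psi$ for arbitrary smooth $\psi$ with $\beta+\ddbar\psi>0$. The envelope, its contact set $D_\psi$, and hence the volume depend only on classes, but the pointwise inequality improves if one chooses $\psi$ such that $\alpha^{n-1}\wedge(\beta+\ddbar\psi)$ is a constant multiple of $\alpha^n$. This is solvable as a linear Laplace-type equation relative to the Gauduchon/balanced structure $\alpha^{n-1}$. The inequality on the complement then reduces to
\[
\Bigl(1-n\tfrac{\int\alpha^{n-1}\wedge\beta}{\int\alpha^n}\Bigr)\int_{X\setminus D}\alpha^n ,
\]
which is positive and goes the wrong way. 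So this must be combined with a variational argument instead: differentiate $t\mapsto\vol([\alpha-t\beta])$. By the derivative formula for envelopes along the contact set, this derivative equals $-n\int_{D_t}(\alpha-t\beta)^{n-1}\wedge\beta$. The key inequality is then
\[
\int_{D_t}(\alpha-t\beta)^{n-1}\wedge\beta\le\int_X\alpha^{n-1}\wedge\beta ,
\]
which follows from monotonicity of positive products of currents (the non-pluripolar product of the envelope current versus the smooth $\alpha$), since $(\alpha-t\beta+\ddbar V_t)\le$ a current cohomologous to $\alpha$ minus $t\beta$. Integrating from $t=0$ to $1$ gives the desired estimate, and bigness is preserved along the path by openness until the volume bound reaches zero. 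I expect this derivative/monotonicity step, namely establishing the directional differentiability of $t\mapsto\vol$ via envelopes on a general K\"ahler manifold and the comparison of mixed masses, to be the main difficulty.
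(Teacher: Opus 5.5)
\textbf{There is a genuine gap.} It sits in Step 2, at the step you yourself flag as the main difficulty, and it is not a technicality.

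\emph{The derivative formula you invoke is false as stated.} Already for $n=1$ it fails. Take K\"ahler forms $\alpha,\beta$ on a curve and $t$ with $\int_X\alpha>t\int_X\beta$ but $\alpha-t\beta<0$ somewhere. Then $\vol([\alpha-t\beta])=\int_X\alpha-t\int_X\beta$ has derivative $-\int_X\beta$. On the other hand $D_t\neq X$, since otherwise $V_t\equiv 0$ would be $(\alpha-t\beta)$-psh. So your formula predicts $-\int_{D_t}\beta>-\int_X\beta$, which is wrong. The reason is that differentiating $\int_{D_t}(\alpha-t\beta)^n$ also picks up a term from the shrinking of the contact sets ($D_{t'}\subset D_t$ for $t'>t$). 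Since $\mathbf{1}_{D_t}(\alpha-t\beta)^n$ need not vanish near $\partial D_t$, this term is of first order and has precisely the bad sign.

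\emph{The corrected formula is circular here.} The correct rate is the Boucksom--Favre--Jonsson one, $n\langle[\alpha-t\beta]^{n-1}\rangle\cdot[\beta]$, which also sees $X\setminus D_t$. With it, the analogous monotonicity bound $\langle[\alpha-t\beta]^{n-1}\rangle\cdot[\beta]\leq\int_X\alpha^{n-1}\wedge\beta$ and integration in $t$ would indeed conclude. But on a general compact K\"ahler manifold this differentiability is not available independently of what you are proving: BFJ derive it from the transcendental Morse inequality, and in this paper it is one of the corollaries of the theorem.
\begin{itemize}
\item Unconditionally you only have the easy half, from superadditivity of movable products: $\vol([\alpha]-t[\beta])$ decreases \emph{at least} at that rate.
\item The half you need, that it decreases \emph{at most} at that rate, is itself an infinitesimal Morse inequality for the big class $[\alpha-t\beta]$ minus the K\"ahler class $h[\beta]$.
\end{itemize}

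\emph{Step 1 does not rescue it.} As you noticed, the contact-set formula plus the pointwise inequality leaves an uncontrolled term on $X\setminus D$. Also, the $\psi$ solving your Laplace-type equation need not keep $\beta+\ddbar\psi\geq 0$, which that pointwise inequality requires.

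\textbf{What is missing} is a way to subtract $[\beta]$ at the level of currents, without differentiating $\vol$. Witt Nystr\"om does this on projective manifolds with a divisor $H\in|A|$. The paper argues as follows.
\begin{itemize}
\item Fix K\"ahler forms $\gamma\in[\alpha]$, $\eta\in[\beta]$. Build a closed positive current $T\in[\beta]$ supported on a closed Lebesgue-null set $\Sigma$, as $\eta+\ddbar\max_i\Phi_i$. The local pieces satisfy $\eta+\ddbar\Phi_i\equiv 0$, and the constants are chosen generically so that the set where two pieces tie lies in level sets of nonconstant real-analytic functions.
\item Regularize $T$ by K\"ahler forms $T_j$ and solve $\gamma_j^n=(1-\delta)\gamma^n+c\,T_j\wedge\chi^{n-1}$, with $\delta=n\int_X\gamma^{n-1}\wedge\eta/\int_X\gamma^n$ and $\chi$ Gauduchon.
\item An eigenvalue and H\"older estimate gives $\int_{U_j}\gamma_j\wedge\chi^{n-1}\geq\int_X\eta\wedge\chi^{n-1}-C/j$ on neighborhoods $U_j$ shrinking to $\Sigma$.
\item Because $\Sigma$ is null, any weak limit $\gamma_\infty\in[\alpha]$ has singular part of $\chi$-mass at least $\int_X\eta\wedge\chi^{n-1}$. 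Hence $\int_X(\gamma-\eta)\wedge\chi^{n-1}\geq\int_X(\gamma_\infty)_{\rm ac}\wedge\chi^{n-1}$, while $(\gamma_\infty)_{\rm ac}^n\geq(1-\delta)\gamma^n$.
\item A Hahn--Banach separation over all Gauduchon metrics, in the spirit of Lamari's duality, is applied to the closed convex set of positive currents with $T_{\rm ac}^n\geq(1-\delta)\gamma^n$. It yields $R\in[\alpha]-[\beta]$ with $R_{\rm ac}^n\geq(1-\delta)\gamma^n$, which is exactly the desired volume bound.
\end{itemize}
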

The case when $X$ is projective and $[\alpha],[\beta]$ are first Chern classes of line bundles is a classical result due to Siu, Catanese, Ein-Lazarsfeld (see \cite[Thm. 2.2.15]{Laz}), and independently Trapani \cite{Tr}, and this easily extends to $\mathbb{R}$-divisors \cite[Lemma 4.2]{BDPP}. Further analytic and algebraic refinements were found by Demailly \cite[\S 12]{Dem4}, Trapani \cite{Tr2} and Angelini \cite{An}. The case of arbitrary nef $(1,1)$-classes on a projective manifold was proved by Witt Nystr\"om \cite{WN} in 2016. For general K\"ahler manifolds, Popovici \cite{Po} (sharpening ideas of Chiose \cite{Ch} and Xiao \cite{Xi}) proved that if the RHS of \eqref{vol} is strictly positive, then so is the LHS. His method also proves a weaker bound than \eqref{vol}, with the constant $n$ on the RHS replaced by $n^2$ \cite{To, Po2}.
\subsection{Applications}
The seemingly innocuous statement of Theorem \ref{main}, which is the transcendental version of a relatively easy statement in algebraic geometry, turns out to have far-reaching consequences which were discovered in the pioneering and deep works of Boucksom, Demailly, Favre, Jonsson, P\u{a}un and Peternell in \cite{BDPP,BFJ}. Indeed, as explained for example in the clear exposition by Boucksom in the appendix of \cite{WN}, the following results are corollaries of Theorem \ref{main}:
\begin{corollary}\label{cor}
Let $X^n$ be a compact K\"ahler manifold. Then we have:
\begin{itemize}
\item (Orthogonality of divisorial Zariski decompositions) For every pseudoeffective class $[\alpha]\in H^{1,1}(X,\mathbb{R})$,
\begin{equation}\label{orth}
\vol([\alpha])=\langle [\alpha]^{n-1}\rangle\cdot[\alpha].
\end{equation}
\item (Differentiability of $\vol$) $\vol$ is locally $C^{1,1}$ in the cone of big $(1,1)$-classes, and its gradient can be described as follows. Given $[\alpha],[\beta]\in H^{1,1}(X,\mathbb{R})$ with $[\alpha]$ big, we have
\begin{equation}\label{deriv}
\frac{d}{dt}\bigg|_{t=0}\vol([\alpha]+t[\beta])=n\langle [\alpha]^{n-1}\rangle\cdot[\beta].
\end{equation}
\item (Dual of the pseudoeffective cone) The pseudoeffective cone $\mathcal{E}\subset H^{1,1}(X,\mathbb{R})$ and the movable cone $\mathcal{M}\subset H^{n-1,n-1}(X,\mathbb{R})$ are dual under the Poincar\'e pairing.
\end{itemize}
\end{corollary}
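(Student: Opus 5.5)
The plan is to follow the route in Boucksom's appendix to \cite{WN}, taking Theorem \ref{main} as given. Everything is reduced to a single statement: a first-order expansion of $\vol$ at a big class. I will use the standard facts about the positive intersection product $\langle\cdot\rangle$ of \cite{BDPP,BFJ}: it is continuous on the big cone, multilinear on positive combinations, satisfies $\vol([\alpha])=\langle[\alpha]^n\rangle$, and can be computed via modifications $\mu:\tilde X\to X$ and Kähler approximations $\mu^*[\alpha]=[\beta_\epsilon]+[E_\epsilon]$, with $[\beta_\epsilon]$ nef and $E_\epsilon$ effective.

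\textbf{Step 1: derivative along $[\gamma]$ Kähler.} Let $[\alpha]$ be big and $[\gamma]$ a Kähler class. For the lower bound on $\vol([\alpha]-t[\gamma])$, I apply Theorem \ref{main} on $\tilde X$ to the nef classes $[\beta_\epsilon]$ and $t\mu^*[\gamma]$. Using $\vol([\alpha]-t[\gamma])\geq \vol([\beta_\epsilon]-t\mu^*[\gamma])$, this gives
\[
\vol([\alpha]-t[\gamma])\geq \int\beta_\epsilon^n - nt\int\beta_\epsilon^{n-1}\wedge\mu^*\gamma .
\]
Letting $\epsilon\to0$ yields $\vol([\alpha]-t[\gamma])\geq \vol([\alpha])-nt\langle[\alpha]^{n-1}\rangle\cdot[\gamma]$. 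I expect this to hold only up to an $O(t^2)$ error, which comes from using the genuine Morse bound rather than the $n^2$ version. For the upper bound I use superadditivity/concavity of $\langle\cdot\rangle$. Together with $\vol([\alpha]+t[\gamma])\geq \langle([\alpha]+t[\gamma])^n\rangle\geq \vol([\alpha])+nt\langle[\alpha]^{n-1}\rangle\cdot[\gamma]$, the function $f(t)=\vol([\alpha]+t[\gamma])$ is squeezed between expansions with the same linear coefficient. This gives the derivative formula \eqref{deriv} for $[\beta]=[\gamma]$.

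\textbf{Step 2: all directions and regularity.} Since Kähler classes span $H^{1,1}$, write $[\beta]=[\gamma_1]-[\gamma_2]$ and apply Step 1 with the mixed Morse inequality. This gives \eqref{deriv} for all $[\beta]$. The right-hand side is continuous in $[\alpha]$, so $\vol$ is $C^1$ on the big cone. The local $C^{1,1}$ bound should follow from the quadratic error terms in Step 1 being locally uniform.

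\textbf{Step 3: orthogonality.} Apply \eqref{deriv} with $[\beta]=[\alpha]$. By homogeneity, $\frac{d}{dt}|_{t=0}\vol((1+t)[\alpha])=n\vol([\alpha])$. Hence $\vol([\alpha])=\langle[\alpha]^{n-1}\rangle\cdot[\alpha]$ for big $[\alpha]$, which is \eqref{orth} there. For pseudoeffective $[\alpha]$, apply this to $[\alpha]+\epsilon[\omega]$ and let $\epsilon\to0$, using continuity of $\vol$ and the monotone limit definition of $\langle\cdot\rangle$ on the boundary. I expect this limit to be one of the more delicate steps.

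\textbf{Step 4: duality.} One inclusion, $\mathcal E\subset\mathcal M^\vee$, is immediate since movable classes pair nonnegatively with positive currents. For the converse, take $[\alpha]\notin\mathcal E$ and let $[\alpha_0]$ be its projection onto the closed convex cone $\mathcal E$ with respect to a fixed Hodge-type inner product; I expect this step to be the main obstacle. Set $[\gamma_s]=[\alpha_0]+s[\omega]$, which is big for $s>0$. Differentiating $\vol$ via Step 2 along the segment toward $[\alpha]$, and using $\vol=0$ at $[\alpha]$, should produce the separation
\[
\langle[\gamma_s]^{n-1}\rangle\cdot[\alpha]<0
\]
for small $s$, using orthogonality from Step 3 as in the algebraic argument of \cite{BDPP}. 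Since $\langle[\gamma_s]^{n-1}\rangle\in\mathcal M$, this shows $[\alpha]\notin\mathcal M^\vee$. The delicate point is controlling $\langle[\gamma_s]^{n-1}\rangle\cdot[\alpha_0]=\vol([\gamma_s])-s\langle[\gamma_s]^{n-1}\rangle\cdot[\omega]$ as $s\to0$, which is where the precise constant $n$ in Theorem \ref{main} enters.
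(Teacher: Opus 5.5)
You are following the route the paper defers to (Boucksom's appendix in \cite{WN}, and \cite{BFJ,CaT}). Step 3 is fine; the boundary case is actually immediate, since $\langle[\alpha]^{n-1}\rangle$ is by definition $\lim_{\ve\to 0}\langle([\alpha]+\ve[\omega])^{n-1}\rangle$ and $\vol$ is continuous. Step 4, however, has a genuine gap.

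The projection $[\alpha_0]$ depends on an arbitrary inner product and has no relation to positive products. The inequality $\langle[\gamma_s]^{n-1}\rangle\cdot[\alpha]<0$ for small $s$ is false in general. Take $X=(\mathbb{P}^1)^3$, where $\mathcal{E}$ is the closed orthant in the basis $H_1,H_2,H_3$ of pulled-back hyperplane classes and $\langle\gamma^2\rangle=\gamma^2$ for nef $\gamma$. Let $[\alpha]=H_1-H_2+2H_3$, $[\omega]=H_1+H_2+H_3$, and use the inner product with Gram matrix rows $(1,0,0),(0,5,2),(0,2,1)$. Then the projection is $[\alpha_0]=H_1$, yet $[\gamma_s]^2\cdot[\alpha]=2s(1+2s)>0$ for all $s>0$.

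What works is to look at an interior point of a segment rather than at a limit $s\to 0$. Take any big $[\gamma_0]$ and set $\gamma(u)=(1-u)[\gamma_0]+u[\alpha]$. Let $u^*$ be the exit time from $\mathcal{E}$; it satisfies $u^*<1$ because $[\alpha]\notin\mathcal{E}$. So $\gamma(u)$ is big for $u<u^*$ and $\vol(\gamma(u^*))=0$. By \eqref{deriv} and \eqref{orth} at $\gamma(u)$,
\[
\frac{d}{du}\Big[(1-u)^{-n}\vol(\gamma(u))\Big]=n(1-u)^{-n-1}\,\langle\gamma(u)^{n-1}\rangle\cdot[\alpha].
\]
The bracket drops from $\vol([\gamma_0])>0$ to $0$ on $[0,u^*]$. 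Hence some $u\in(0,u^*)$ gives a movable class $\langle\gamma(u)^{n-1}\rangle$ pairing negatively with $[\alpha]$. The constant $n$ of Theorem \ref{main} plays no further role here; it has already been used up in \eqref{deriv}.

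Step 1 also does not close as written. Put $c=n\langle[\alpha]^{n-1}\rangle\cdot[\gamma]$ and $t\geq 0$. You derive $f(t)\geq f(0)+ct$ from superadditivity and $f(-t)\geq f(0)-ct$ from Theorem \ref{main}; the latter actually holds exactly, with no $O(t^2)$ error. Both are lower bounds, i.e.\ a supporting line from below. They do not squeeze $f$: the function $f(0)+ct+|t|$ satisfies both.

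The upper control comes from applying the same two bounds at the shifted big class $[\alpha]+t[\gamma]$. This gives $c(0)\leq (f(t)-f(0))/t\leq c(t)$ with $c(t)=n\langle([\alpha]+t[\gamma])^{n-1}\rangle\cdot[\gamma]$, and continuity of positive products on the big cone finishes the argument. Alternatively, concavity of $\vol^{1/n}$ forces $f'_+(0)\leq f'_-(0)$.

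The same issue affects $C^{1,1}$. The locally uniform $O(t^2)$ error that Theorem \ref{main} gives in mixed directions $[\gamma_1]-[\gamma_2]$ yields only local semiconvexity. A $C^1$ semiconvex function need not be $C^{1,1}$, e.g.\ $|t|^{3/2}$. The matching semiconcavity again comes from concavity of $\vol^{1/n}$.
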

Here $\langle \cdot\rangle$ denotes the movable intersection products of Boucksom \cite{BoT,BDPP}, and the movable cone $\mathcal{M}$ is the closed cone generated by classes of $(n-1,n-1)$-currents of the form $\pi_*(\ti{\omega_1}\wedge\cdots\wedge\ti{\omega}_{n-1})$ where $\pi:\ti{X}\to X$ is any modification and $\ti{\omega}_j$ are K\"ahler forms on $\ti{X}$.

A few remarks about these results. Divisorial Zariski decompositions were defined by Boucksom \cite{Bo2}, and the orthogonality statement in \eqref{orth} can be interpreted as the appropriate generalization of the fact that on surfaces the positive and negative parts of the Zariski decomposition of a line bundle are orthogonal, see the discussion in \cite{BDPP}. Boucksom-Demailly-P\u{a}un-Peternell \cite{BDPP} had proved orthogonality and the duality statement for projective manifolds and classes in the real N\'eron-Severi group, and this last restriction was removed by Witt Nystr\"om. In the general setting, the author \cite{To2} proved orthogonality \eqref{orth} when $\vol([\alpha])=0$.

The $C^1$ differentiability of $\vol$ on the N\'eron-Severi part of the big cone of a projective manifold was proved independently by Boucksom-Favre-Jonsson \cite{BFJ} and Lazarsfeld-Musta\c{t}\u{a} \cite{LM}. Boucksom-Favre-Jonsson also proved \eqref{deriv}, and showed that their proof would go through in the general case provided Theorem \ref{main} holds, see again the appendix of \cite{WN}. Cao and the author \cite{CaT} recently observed that the transcendental Morse inequality \eqref{vol}, together with the $C^1$ differentiability and \eqref{deriv}, implies that the volume function is locally $C^{1,1}$ inside the big cone of a compact K\"ahler manifold, and this is sharp in general. This result is now therefore proved unconditionally. At the boundary of the big cone, $\vol$ is only locally Lipschitz and no better, in general, see \cite{CaT,FLT} for examples, even when restricting $\vol$ to segments.

Furthermore, if we take $[\beta]=c_1(D)$ where $D\subset X$ is a prime divisor, then Witt Nystr\"om \cite{WN2} and Vu \cite{Vu} proved unconditionally that the derivative in \eqref{deriv} exists, and equals $n\langle [\alpha]^{n-1}\rangle|_{X|D},$ which is $n$ times the numerical restricted volume of $[\alpha]$ along $D$, as defined by Boucksom \cite{BoT} and studied further by Collins and the author \cite{CT2}. As a corollary of this (or following directly the arguments in \cite{BFJ}), we obtain the transcendental generalization of \cite[Thm B]{BFJ}:
\begin{corollary}
If $D\subset X$ is a prime divisor on a compact K\"ahler manifold and $[\alpha]\in H^{1,1}(X,\mathbb{R})$ is a big class, then we have
\begin{equation}\label{uguali}
\langle [\alpha]^{n-1}\rangle|_{X|D}=\langle [\alpha]^{n-1}\rangle \cdot D.
\end{equation}
Furthermore, this quantity equals zero if and only if $D$ is contained in the non-K\"ahler locus $E_{nK}([\alpha])$.
\end{corollary}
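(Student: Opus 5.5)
The plan is to follow the argument of Boucksom--Favre--Jonsson for \cite[Thm B]{BFJ}, using the derivative formula \eqref{deriv} from Corollary \ref{cor}, which is now unconditional, together with the restricted-volume derivative formula of Witt Nystr\"om \cite{WN2} and Vu \cite{Vu}.

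\textbf{The identity \eqref{uguali}.} Take $[\beta]=c_1(D)=[D]$ and compute $\frac{d}{dt}\big|_{t=0}\vol([\alpha]+t[D])$ in two ways. By \eqref{deriv} this derivative equals $n\langle[\alpha]^{n-1}\rangle\cdot[D]$. By \cite{WN2,Vu} the same derivative exists and equals $n\langle[\alpha]^{n-1}\rangle|_{X|D}$. Comparing the two gives \eqref{uguali} at once.

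The only thing to check is that the hypotheses of \cite{WN2,Vu} cover every big class, including the case $D\subset E_{nK}([\alpha])$. I expect this is so, since those results are stated for arbitrary big $[\alpha]$ and prime $D$.

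As a fallback, one could use only the one-sided statement for $t\geq 0$. On that side the restricted volume is known to control the difference quotient, from the inequality $\vol([\alpha]+t[D])-\vol([\alpha])\geq n\int_0^t\langle([\alpha]+s[D])^{n-1}\rangle|_{X|D}\,ds$ and its converse. The right derivative is then identified with $n\langle[\alpha]^{n-1}\rangle|_{X|D}$, and differentiability from \eqref{deriv} forces it to equal $n\langle[\alpha]^{n-1}\rangle\cdot[D]$.

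\textbf{The vanishing criterion.} For this part it suffices to use the known properties of the numerical restricted volume from \cite{BoT,CT2}. If $D\not\subset E_{nK}([\alpha])$, there is a K\"ahler current $T\in[\alpha]$ with analytic singularities that is smooth at the generic point of $D$ and satisfies $T\geq\ve\omega$. Restricting to $D$ then gives $\langle[\alpha]^{n-1}\rangle|_{X|D}\geq\ve^{n-1}\int_D\omega^{n-1}>0$.

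Conversely, suppose $D\subset E_{nK}([\alpha])$. Every current in the approximating families used to define the restricted product has positive Lelong number along $D$, since by \cite{CT2} (Collins--Tosatti) $E_{nK}$ equals the union of the subvarieties $V$ with $\vol_{X|V}=0$. The main obstacle is that $\langle[\alpha]^{n-1}\rangle|_{X|D}$ is a restricted movable product rather than the restricted volume $\vol_{X|D}([\alpha])=\langle[\alpha]^{n-1}\rangle|_{X|D}$ in the $(n-1)$-dimensional sense. I would either invoke the identification of these two quantities in \cite{CT2}, or argue directly.

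For the direct argument, note that $E_{nK}([\alpha])\supset D$ implies $[\alpha]-\ve[D]$ is still big for small $\ve>0$ (via the divisorial part of $E_{nK}$). Then $\vol([\alpha]-t[D])=\vol([\alpha])$ for small $t$, since $D$ is in the negative part or the non-K\"ahler locus. Hence the derivative in the direction $[D]$ vanishes on the left. Since the function is differentiable, the full derivative vanishes, and so $\langle[\alpha]^{n-1}\rangle\cdot D=0$.
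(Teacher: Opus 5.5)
Your proof of the identity is the paper's argument: compare \eqref{deriv}, now unconditional by Corollary~\ref{cor}, with the formula of \cite{WN2,Vu} for $\frac{d}{dt}\big|_{t=0}\vol([\alpha]+t[D])$. That formula holds for every big $[\alpha]$ and prime $D$, so your fallback is unnecessary. The implication $D\not\subset E_{nK}([\alpha])\Rightarrow\langle[\alpha]^{n-1}\rangle|_{X|D}>0$ is also fine, essentially by definition. For the vanishing criterion the paper proves nothing new; it simply cites \cite{Vu,WN2}. Your own proof of the converse, $D\subset E_{nK}([\alpha])\Rightarrow 0$, has a genuine gap.

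\textbf{The appeal to \cite{CT2} is circular.} Describing $E_{nK}$ of an arbitrary big class as the union of subvarieties with vanishing restricted volume is exactly the Collins--Tosatti conjecture, and its divisorial case is what you are proving. What Collins--Tosatti proved is $E_{nK}=\mathrm{Null}$ for nef and big classes.

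\textbf{The direct argument uses a false claim.} It rests on $\vol([\alpha]-t[D])=\vol([\alpha])$ for small $t>0$.
\begin{itemize}
\item This is true when $D$ is a component of the negative part $N([\alpha])$. Then $[\alpha]-t[D]=Z([\alpha])+\{N([\alpha])-tD\}$ with $N([\alpha])-tD$ effective, and $\vol(Z([\alpha]))=\vol([\alpha])$. So your argument does cover that case.
\item It fails when $D\subset E_{nK}([\alpha])$ but $D$ is not a component of $N([\alpha])$, and this is precisely the nontrivial case.
\end{itemize}
For example, on $X=\mathrm{Bl}_p\mathbb{P}^2$ take $[\alpha]=\pi^*c_1(\mathcal{O}(1))$ and $D=E$ the exceptional curve. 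Then $[\alpha]$ is nef and big with $[\alpha]\cdot E=0$. Hence $E\subset\mathrm{Null}([\alpha])=E_{nK}([\alpha])$ and $N([\alpha])=0$. Yet $\vol([\alpha]-t[E])=1-t^2$ for $0\leq t\leq 1$, which is not constant.

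The one-sided derivative does vanish in this example, but nothing in your argument explains why in general. Showing $\langle[\alpha]^{n-1}\rangle\cdot D=0$ for such $D$ is the real content of \cite{Vu,WN2}. You should either cite those works, as the paper does, or supply an actual argument for this step.
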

The last statement, which is the divisorial part of a conjecture of Collins and the author \cite{CT2}, was already proved in \cite{Vu,WN2}, but \eqref{uguali} was not known in general.

The last corollary that we will mention follows immediately from the work of Toma \cite{Tom}, as observed by Fu-Xiao \cite[Appendix]{FX}, and the arguments in \cite[Remark 5.2]{BDPP}, together with Theorem \ref{main}:
\begin{corollary}
Let $X$ be a compact K\"ahler manifold. Then the movable cone $\mathcal{M}\subset H^{n-1,n-1}(X,\mathbb{R})$ equals the closure of the balanced cone, which consists of cohomology classes of the form $[\gamma^{n-1}]$ where $\gamma$ is a Hermitian metric with $d(\gamma^{n-1})=0$ (``balanced''). Furthermore, $\mathcal{M}$ also equals the closed cone generated by classes of $(n-1,n-1)$-currents of the form $\pi_*(\ti{\omega}^{n-1})$ where $\pi:\ti{X}\to X$ is any modification and $\ti{\omega}$ is a K\"ahler form on $\ti{X}$.
\end{corollary}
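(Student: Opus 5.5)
The corollary has three parts: a characterization of $\mathcal{M}$ as the closure of the balanced cone, and a second generating set built from powers $\ti{\omega}^{n-1}$. I will prove both by showing that each smaller cone is dual to the pseudoeffective cone $\mathcal{E}$, and then appealing to Corollary \ref{cor}.

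Denote by $\mathcal{B}$ the closure of the balanced cone, and by $\mathcal{M}'$ the closed cone generated by the classes $\pi_*(\ti{\omega}^{n-1})$. There are obvious containments $\mathcal{M}'\subset\mathcal{M}$, since $\ti\omega^{n-1}$ is a special case of $\ti\omega_1\wedge\cdots\wedge\ti\omega_{n-1}$.

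\textbf{Step 1: $\mathcal{B}\subset\mathcal{M}$.} By Corollary \ref{cor}, $\mathcal{M}=\mathcal{E}^\vee$. So it suffices to show that every balanced class pairs nonnegatively with every pseudoeffective class. This is immediate: if $\gamma^{n-1}$ is a closed strictly positive $(n-1,n-1)$-form and $T\geq 0$ is a closed positive current, then $\int_X T\wedge\gamma^{n-1}\geq 0$.

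\textbf{Step 2: $\mathcal{E}=\mathcal{B}^\vee$.} This is Toma's theorem \cite{Tom}, as recorded in \cite[Appendix]{FX}. It states that a $(1,1)$-class pairing nonnegatively with all balanced classes is pseudoeffective. It is a Hahn--Banach/Lamari-type duality: if a class $[\alpha]$ contains no positive current, one separates and produces a strictly positive $d$-closed $(n-1,n-1)$-form $\Omega$ with $\int\alpha\wedge\Omega<0$. Since any strictly positive $(n-1,n-1)$-form is $\gamma^{n-1}$ for a unique Hermitian $\gamma$, this $\Omega$ is balanced. Taking biduals of closed convex cones then gives $\mathcal{B}=\mathcal{E}^\vee=\mathcal{M}$.

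\textbf{Step 3: $\mathcal{M}'=\mathcal{M}$.} For this I follow \cite[Remark 5.2]{BDPP}. By bipolarity it suffices to show that a class $[\alpha]$ with $[\alpha]\cdot\pi_*\ti\omega^{n-1}\geq 0$ for all $\pi,\ti\omega$ is pseudoeffective. Suppose instead $[\alpha]\notin\mathcal{E}$. One chooses the class $[\beta]$ obtained by pushing $[\alpha]$ slightly off $\mathcal{E}$ in the opposite direction, with $[\beta]$ big, and picks $\pi$ and $\ti\omega$ from an approximate Zariski decomposition of $[\beta]$ (Boucksom/Demailly regularization). The orthogonality estimate from Theorem \ref{main}, namely $\vol([\beta])=\langle[\beta]^{n-1}\rangle\cdot[\beta]$ together with its quantitative version from \cite{BDPP}, then forces $[\alpha]\cdot\pi_*\ti\omega^{n-1}<0$. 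This is exactly the argument of \cite[Thm 2.2 / Remark 5.2]{BDPP}, which used only the Morse inequality and now holds on Kähler $X$ by Theorem \ref{main}.

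The main obstacle is Step 3. One must check that the approximate Zariski decomposition exists in the transcendental Kähler setting, via Demailly regularization and resolution of the singularities of analytic-singularity currents. One must also check that the BDPP estimate $\left(\int\ti\alpha\wedge\ti\omega^{n-1}\right)^2\leq C\,(\vol(\beta)-\ti\omega^n)$ goes through, since it relies only on Theorem \ref{main} applied on $\ti X$.
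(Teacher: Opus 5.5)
\textbf{There is a genuine gap in your Step 2.} The Hahn--Banach separation you sketch is Lamari's. You separate the positive $(1,1)$-currents from the affine space $\alpha+\ddbar\mathcal{D}'$ of closed currents in $[\alpha]$. Being orthogonal to every $\ddbar u$ only forces $\de\db\sigma=0$, so the separating form is $\chi^{n-1}$ for a Gauduchon metric $\chi$, exactly as in Proposition \ref{hb}. It is not $d$-closed, so you cannot conclude that it is balanced.

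This is not a fixable detail. Lamari's theorem gives $\mathcal{E}^\vee=\ov{\mathcal{G}}$, where $\mathcal{G}$ is the cone of classes of $\chi^{n-1}$ with $\chi$ Gauduchon, read in $H^{n-1,n-1}$ via the $\de\db$-lemma. So the duality $\mathcal{E}=\ov{\mathcal{B}}^\vee$ that you attribute to Toma is equivalent to $\ov{\mathcal{B}}=\ov{\mathcal{G}}$. Combined with $\mathcal{E}^\vee=\mathcal{M}$ from Corollary \ref{cor}, it is literally the statement $\ov{\mathcal{B}}=\mathcal{M}$ you are trying to prove. Step 2 therefore presupposes the conclusion rather than supplying it.

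What is actually missing after your (correct) Step 1 is the inclusion $\mathcal{M}\subseteq\ov{\mathcal{B}}$, and it needs a geometric input rather than a duality argument. For a modification $\pi:\ti X\to X$ and Kähler forms $\ti\omega_i$ on $\ti X$, the form $\ti\omega_1\wedge\cdots\wedge\ti\omega_{n-1}$ equals $\ti\gamma^{n-1}$ for a balanced metric $\ti\gamma$ on $\ti X$. You must show that the class of $\pi_*(\ti\gamma^{n-1})$ lies in $\ov{\mathcal{B}}$. This is the Alessandrini--Bassanelli result on modifications of balanced manifolds, in its class-level form; its proof lifts positive $\de\db$-closed $(1,1)$-currents through $\pi$. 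This is the input behind the Toma/Fu--Xiao argument the paper cites, and it needs no Morse inequality. Theorem \ref{main} enters only through the other inclusion $\ov{\mathcal{B}}\subseteq\mathcal{E}^\vee=\mathcal{M}$, which is your Step 1.

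Your Step 3 follows the BDPP route via approximate Zariski decompositions and the orthogonality estimate on $\ti X$, as the paper indicates, and is fine in outline. To make it precise:
\begin{itemize}
\item Move $[\alpha]$ to the boundary point $[\alpha]+t_0[\omega]\in\de\mathcal{E}$, with $t_0>0$.
\item Apply the orthogonality estimate to the big classes $[\alpha]+(t_0+\delta)[\omega]$, whose volumes tend to $0$.
\item This yields classes $\pi_*\ti\beta^{n-1}$ with $([\alpha]+t_0[\omega])\cdot\pi_*\ti\beta^{n-1}\leq\ve\,[\omega]\cdot\pi_*\ti\beta^{n-1}$ for arbitrarily small $\ve>0$.
\item Taking $\ve<t_0$ gives $[\alpha]\cdot\pi_*\ti\beta^{n-1}<0$.
\end{itemize}
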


\subsection{Ideas of proof}
The basic method that we employ to prove Theorem \ref{main}, which was also underpinning many of the above-mentioned previous works, is the {\em mass concentration} technique, which first appeared in Demailly's visionary paper \cite{Dem2}, and was later greatly extended by Demailly-P\u{a}un \cite{DP}, and further refined in \cite{Ch, Xi, Po}. By definition, to obtain a lower bound for the volume of $[\alpha]-[\beta]$, it suffices to construct a closed positive current $T$ in this class with an explicit lower bound for its absolutely-continuous Monge-Amp\`ere operator $T_{\rm ac}^n$, which after integration provides a lower bound for the volume. For this, Demailly's idea is to solve a family of complex Monge-Amp\`ere equations, whose solutions are provided by the Calabi-Yau Theorem \cite{Ya}, and with RHS of the equation that degenerates in the limit, to a singular measure that concentrates along a closed analytic subvariety. One then shows that any weak limit $T$ of these solutions will have a nontrivial mass along the subvariety, and one can try to leverage this to bound $T_{\rm ac}^n$. In the case of \eqref{vol}, this strategy was first implemented in \cite[Appendix]{BDPP} in the projective case, and resulted in a suboptimal constant. The works \cite{Ch, Xi, Po} observed that the mass concentration technique can be coupled with a duality result of Lamari \cite{Lam}, involving Gauduchon metrics (Hermitian metrics $\chi$ with $\de\db(\chi^{n-1})=0$), to produce the desired current $T$ via the Hahn-Banach theorem (a versatile technique discovered by Sullivan \cite{Su}). Subsequently, Witt Nystr\"om exploited the fact that in the projective case one may assume $[\beta]=c_1(A)$ where $A$ is very ample, and use the current of integration $[H]$ along a divisor $H\in |A|$ to ``subtract off'' $[\beta]$ from $[\alpha]$ and relate directly the space of $\alpha$-psh functions and of $(\alpha-\beta)$-psh functions. He also remarked, see also McCleerey \cite{Mc}, that in the general K\"ahler case, the role of $[H]$ could be played by a closed positive current in a K\"ahler class $[\beta]$ which is supported on a pluripolar set. Unfortunately, it seems very hard to construct such plurisupported currents in general.

Our first observation is that it is easy to construct closed positive currents in any K\"ahler class which are supported on a Lebesgue nullset (Proposition \ref{conc}). These will typically be supported on a union of real-analytic hypersurfaces. We then modify the mass concentration procedure of \cite{BDPP} using one of these currents (or rather its regularizations) in the class $[\beta]$ (which may be assumed to be K\"ahler), and are able to show that any weak limit $\gamma_\infty$ of the solutions of the family of complex Monge-Amp\`ere equations \eqref{ma}, which belong to $[\alpha]$, will have a sharp lower bound for $(\gamma_\infty)_{\rm ac}^n$. Here we use crucially that our auxiliary current is supported on a nullset, which forces the singular part $(\gamma_\infty)_{\rm sing}$ of the limit to have at least the same mass as what a current in the class $[\beta]$ would have. This can be viewed as a transcendental analog of the above-mentioned subtraction of $[\beta]$ in \cite{WN}. In this argument, there is also an auxiliary Gauduchon metric $\chi$, and the output of the mass concentration is then fed into a modification of the Lamari's Hahn-Banach argument (Proposition \ref{hb}), which instead of producing a current $R$ (in the class $[\alpha]-[\beta]$) with an explicit positive lower bound (which is what the arguments in \cite{Lam,Ch,Xi,Po,To} do), gives an explicit positive lower bound for $R_{\rm ac}^n$, which is exactly what suffices to estimate $\vol([\alpha]-[\beta])$.

\subsection*{Acknowledgments} The author is grateful to David Witt Nystr\"om and Nick McCleerey for discussions around 2018 about the method of \cite{WN} and plurisupported currents. The idea that the role of plurisupported currents in a K\"ahler class (whose existence remains unclear) could be played by currents supported on a Lebesgue nullset (which can be easily constructed) was discovered during discussions with ChatGPT. The author also thanks Mihai P\u{a}un for comments on a previous version. The author was partially supported by NSF grant DMS-2404599.

\section{Proof of the main theorem}
Throughout the paper we assume $n\geq 2$, since when $n=1$ the main theorem is well-known, simple, and left as an exercise.
\subsection{Currents supported on a nullset}
\begin{proposition}\label{conc} Let $(X^n,\omega)$ be a compact K\"ahler manifold. Then there is a closed positive current $T\in [\omega]$ with
\begin{equation}\label{ac}
\mathrm{Supp}(T)\subset\Sigma,
\end{equation}
where $\Sigma\subset X$ is closed with Lebesgue measure zero.
\end{proposition}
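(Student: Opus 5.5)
The plan is to build $T=\omega+\ddbar\varphi$, where locally $\omega+\ddbar\varphi=\ddbar\max(h_1,\dots,h_N)$ for \emph{finitely many} pluriharmonic functions $h_i$. For such a function, $\ddbar\max(h_i)$ is a closed positive current. It vanishes on the open set where a single $h_i$ is strictly largest, since there $\max(h_i)=h_i$ is pluriharmonic. So its support lies in $\bigcup_{i\neq k}\{h_i=h_k\}$. Each such set with $h_i-h_k$ non-constant is the zero set of a nonzero real-analytic function, hence a closed set of Lebesgue measure zero. Finiteness matters: a countable family would give a dense union, whose closure need not be null.

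\textbf{Step 1 (local approximation).} Let $\psi$ be a smooth strictly psh function on a ball $B\subset\mathbb{C}^n$ with $\ddbar\psi\geq c\,\omega_{\rm eucl}$. For $p\in B$ let
\[
h_p(z)=\psi(p)+2\,\mathrm{Re}\Big(\sum_i\psi_i(p)(z-p)_i+\tfrac12\sum_{i,k}\psi_{ik}(p)(z-p)_i(z-p)_k\Big),
\]
the pluriharmonic part of the second-order Taylor polynomial of $\psi$ at $p$, with $\psi_i=\partial\psi/\partial z_i$ and $\psi_{ik}=\partial^2\psi/\partial z_i\partial z_k$. Taylor's formula gives
\[
\psi(z)-h_p(z)=\psi_{i\bar k}(p)(z-p)_i\overline{(z-p)_k}+O(|z-p|^3)\geq \tfrac c2|z-p|^2-C|z-p|^3 .
\]
Fix a radius $r$ so that this is $\geq0$ for $|z-p|\leq r$. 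Outside that radius we must ensure $h_p$ never exceeds $\psi$. To do this, subtract a small constant and work on a slightly shrunk ball; if needed, replace $h_p$ by $h_p-\tfrac c4|\cdot|^2$-type corrections, using that $\psi$ dominates $c|z|^2$ up to a pluriharmonic term. Then take a finite $\delta$-net $\{p\}$ of the shrunk ball and set $w=\max_p(h_p-C\delta^2)$. The goal is
\[
\psi-C'\delta^2\leq w\leq \psi
\]
on a slightly smaller ball, uniformly in $\delta$. I expect this step to be the main technical obstacle: the lower bound is immediate from the nearest net point, but the global upper bound $h_p\leq\psi$ away from $p$ needs care, most simply by working on balls of radius comparable to $r$.

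\textbf{Step 2 (gluing).} Cover $X$ by finitely many coordinate balls $U_j$ with $\omega=\ddbar\psi_j$ on $U_j$, and compact sets $K_j\Subset U_j$ that still cover $X$. Choose smooth $\rho_j\leq0$ on $U_j$ with $\rho_j=0$ on $K_j$ and $\rho_j\leq-1$ near $\partial U_j$. For $\varepsilon>0$ small, $\psi_j+\varepsilon\rho_j$ remains strictly psh. Apply Step 1 to it to get $w_j$ with
\[
|w_j-(\psi_j+\varepsilon\rho_j)|\leq C'\delta^2<\varepsilon/3 .
\]
Set $\varphi_j=w_j-\psi_j$, so that $|\varphi_j-\varepsilon\rho_j|<\varepsilon/3$ and $\omega+\ddbar\varphi_j=\ddbar w_j\geq0$ on $U_j$. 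Define $\varphi=\max_j\varphi_j$, with the maximum over those $j$ for which the point lies in $U_j$.

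Near $\partial U_j$ we have $\varphi_j<-2\varepsilon/3$. But every point lies in some $K_k$, where $\varphi_k>-\varepsilon/3$. So $\varphi_j$ never achieves the maximum near $\partial U_j$, and $\varphi$ is a well-defined continuous $\omega$-psh function on $X$.

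\textbf{Step 3 (support).} Near any point, $\omega+\ddbar\varphi$ is locally $\ddbar$ of a maximum of finitely many pluriharmonic functions. These are the $h_p$'s from the various $w_j$, each corrected by the same local pluriharmonic difference $\psi_j-\psi_k$ so that all are expressed relative to one potential. By the opening observation, $T=\omega+\ddbar\varphi\in[\omega]$ is a closed positive current supported in the closed set $\Sigma$, a finite union of tie sets $\{h=h'\}$ where $h\neq h'$. We discard pairs with $h-h'$ constant: those pairs either never tie or coincide identically, and in either case they contribute nothing to the support. Each remaining tie set is the zero set of a non-constant real-analytic function, so $\Sigma$ has Lebesgue measure zero.
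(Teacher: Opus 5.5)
Your proof is correct, with one caveat on how Step 1 is closed: of the fixes you list, only the last one works. Subtracting a small constant cannot prevent $h_p>\psi$ at distance $\gg r$ from $p$. For example, take $\psi=|z|^2+\mathrm{Re}(z_1^3)$ and $p=0$: then $h_0\equiv 0$ but $\psi(-2,0,\dots,0)=-4$. A correction like $h_p-\tfrac c4|z|^2$ is no longer pluriharmonic, which would break Step 3.

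So the balls $U_j$ must have diameter below the Taylor radius $r$, so that $h_p\leq\psi_j+\ve\rho_j$ on all of $U_j$ for every net point $p$. Since $\|\rho_j\|_{C^3}$ grows as the balls shrink, you have to fix the constants in this order:
\begin{enumerate}
\item the potentials $\psi_j$ on fixed charts, with Levi lower bound $c$ and third-derivative bound $C$;
\item the radius of the $U_j$, below the Taylor radius for the constants $c/2$ and $2C$;
\item the cutoffs $\rho_j$;
\item $\ve$, so small that $\psi_j+\ve\rho_j$ still has Levi form $\geq c/2$ and third derivatives $\leq 2C$;
\item finally $\delta$.
\end{enumerate}
With this ordering, your gluing and support arguments go through, including discarding identical or constant-difference pairs.

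The paper uses the same mechanism: locally $T$ is $\ddbar$ of a finite maximum of pluriharmonic functions, and ties lie in zero sets of nonconstant real-analytic functions. But it needs no approximation lemma. At each $x_i$ it subtracts from the local potential essentially your $h_{x_i}$, so that $\vp_i$ has a strict local minimum at $x_i$. It then uses a \emph{single} pluriharmonic branch per chart, the constant $c_i$: $\vp_i+\Phi_i=\max\{\vp_i,c_i\}$, with $\Phi_i=\max\{c_i-\vp_i,0\}$ extended by $0$. Because the potential itself is kept as a branch, the pluriharmonic branch only has to lie below $\vp_i$ near $\partial U_i$, which the strict minimum guarantees. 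The maximum with $0$ then does the gluing, with no cutoffs or uniform estimates. Where $\Phi_i=0$ one has $\omega+\ddbar\Phi_i=\omega\neq 0$. But since the flat regions $\{\Phi_i>0\}$ cover $X$, these parts never realize $\Phi=\max_i\Phi_i>0$.

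Your construction instead insists that each local piece be flat on its entire chart, and that is what forces the $\delta$-net. The paper also chooses the $c_i$ generically so that no two branches coincide identically; your remark that identical branches do not contribute to the support makes this unnecessary.

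In return, your route proves more. Running Step 2 with $\psi_j+u$ in place of $\psi_j$ shows that every smooth $u$ with $\omega+\ddbar u>0$ is a uniform limit of continuous $\omega$-psh functions whose associated currents are supported on closed Lebesgue null sets. The Proposition does not require this.
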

\begin{proof}
Given any $x\in X$, we can find an open set $x\in U$ contained in a coordinate chart, and a smooth strictly plurisubharmonic function $\vp$ on $U$ such that $\omega=\ddbar \vp$ on $U$. As in \cite[Proof of Thm. 4]{Blo}, up to adding to $\vp$ the real part of a holomorphic quadratic polynomial, we may assume without loss that $\vp$ attains a strict local minimum at $x$. Up to shrinking $U$, there is $a>0$ such that $\min_{\de U}\vp\geq \vp(x)+a$. Thus, for any $c\in (\vp(x),\vp(x)+a/2)$, the open set $\{\vp<c\}$ is compactly contained in $U$ and contains $x$. Thus,
\begin{displaymath}
   \Phi = \left\{
     \begin{array}{ll}
     \max\{c-\vp,0\} &  \text{ on }U,\\
      0 &  \text{ on } X\backslash U,
     \end{array}
   \right.
\end{displaymath}
defines a continuous nonnegative $\omega$-psh function on $X$, with $\omega+\ddbar\Phi$ identically zero on the open neighborhood $\{\vp<c\}$ of $x$ (where it is of course smooth). We will use the freedom to pick the constant $c$ in an open interval.

Repeating this construction at every point $x\in X$, and using compactness, we obtain an open cover $X=\bigcup_{i=1}^N U_i$ by open sets as above, with points $x_i\in U_i$, smooth functions $\vp_i$ on $U_i$, constants $a_i>0$, such that if we pick $c_i\in(\vp_i(x_i),\vp_i(x_i)+a_i/2)$, then the continuous function
\begin{displaymath}
   \Phi_i = \left\{
     \begin{array}{ll}
     \max\{c_i-\vp_i,0\} &  \text{ on }U_i,\\
      0 &  \text{ on } X\backslash U_i,
     \end{array}
   \right.
\end{displaymath}
is $\omega$-psh, nonnegative, supported on $U_i$, smooth on $\{\Phi_i>0\}\ni x_i$ and with $\omega+\ddbar\Phi_i\equiv 0$ there. Up to replacing $(\vp_i(x_i),\vp_i(x_i)+a_i/2)$ with a smaller subinterval, we may assume that the open sets $\{\Phi_i>0\}$ still cover $X$, whenever the constants $c_i$ are chosen in these subintervals.

We now make the choice of the constants $c_i$ as follows. For each $1\leq i<j\leq N,$ and for each connected component $V$ of $U_i\cap U_j$ (of which there are at most countably many), we have that $\vp_i-\vp_j$ is pluriharmonic on $V$, hence real analytic. If it happens that $(\vp_i-\vp_j)|_V$ is a constant, say $A$, then we will choose our constants so that $c_i-c_j\neq A$. Varying the indices $i,j$ and the connected component $V$, we have an at most countable number of such conditions to satisfy. Since each constant $c_i$ can be chosen in some open subinterval of $(\vp_i(x_i),\vp_i(x_i)+a_i/2)$, while the obstructions we just described have Lebesgue measure zero in the product of these intervals, it follows that we can make a choice for the constants $\{c_i\}_{i=1}^N$ which satisfies all these constraints.

With this choice fixed, we obtain the functions $\Phi_i$ as above (with the sets $\{\Phi_i>0\}$ covering $X$), and we set $\Phi:=\max_{1\leq i\leq N}\Phi_i$. This is a continuous function on $X$, which is $\omega$-psh, and we claim that the closed positive real $(1,1)$-current $T:=\omega+\ddbar\Phi$ satisfies the requirement \eqref{ac}. It is clear that $\Phi>0$ on $X$. For each $1\leq i<j\leq N,$ define closed sets
\begin{equation}
\Sigma_{ij}:=\{\Phi_i=\Phi_j=\Phi\}\subset U_i\cap U_j,
\end{equation}
and
\begin{equation}
\Sigma:=\bigcup_{1\leq i<j\leq N}\Sigma_{ij}.
\end{equation}
We claim that $\Sigma$ has Lebesgue measure zero. Indeed, given $1\leq i<j\leq N$ and a connected component $V$ of $U_i\cap U_j$, if $x\in \Sigma_{ij}\cap V$ then we must have
\begin{equation}
\vp_i(x)<c_i,\quad \vp_j(x)<c_j,
\end{equation}
and so
\begin{equation}
\Phi_i(x)=c_i-\vp_i(x), \quad \Phi_j(x)=c_j-\vp_j(x),
\end{equation}
which shows that
\begin{equation}
\Sigma_{ij}\cap V\subset\{\vp_i-\vp_j=c_i-c_j\}\cap V.
\end{equation}
If $\vp_i-\vp_j$ is constant on $V$, then this constant cannot be $c_i-c_j$ thanks to our choice, so in this case $\{\vp_i-\vp_j=c_i-c_j\}\cap V$ is empty.
If, on the other hand, $\vp_i-\vp_j$ is nonconstant on $V$, then the level set $\{\vp_i-\vp_j=c_i-c_j\}\cap V$ has Lebesgue measure zero since $\vp_i-\vp_j$ is real analytic. This proves that $\Sigma$ is a Lebesgue nullset.

Finally, we show that $\mathrm{Supp}(T)\subset\Sigma$. Given $x\in X\backslash\Sigma$, by definition there is a unique index $1\leq j\leq N$ such that $\Phi(x)=\Phi_j(x)$. Since the functions $\Phi_i$ are continuous, and $\Phi_i(x)\neq \Phi(x)$ for all $i\neq j$, it follows that the equality $\Phi=\Phi_j$ holds on an open neighborhood $U$ of $x$. Since $\Phi_j(x)=\Phi(x)>0$, up to shrinking $U$ we may assume that $\Phi_j>0$ on $U$, hence on $U$ we have
\begin{equation}
T=\omega+\ddbar\Phi=\omega+\ddbar\Phi_j\equiv 0,
\end{equation}
as desired.
\end{proof}
\begin{remark}
The support of the currents $T$ produced by Proposition \ref{conc} will typically be a union of real analytic hypersurfaces, and will not be pluripolar in general.
\end{remark}

\subsection{The Hahn-Banach argument}
Given a closed positive real $(1,1)$-current $T$, there is a Lebesgue decomposition $T=T_{\rm ac}+T_{\rm sing}$, see \cite{Bou}, into a positive current whose coefficient measures are absolutely continuous with respect to Lebesgue, and a positive current whose coefficients are singular. This decomposition is linear in $T$.

\begin{proposition}\label{hb}
Let $(X^n,\omega)$ be a compact K\"ahler manifold, $\psi$ be a closed real $(1,1)$-form, and $f>0$ a smooth strictly positive function. Suppose that given any Gauduchon metric $\chi$ on $X$, there is a positive $(1,1)$-current $\xi$ on $X$ (that may depend on $\chi$) such that
\begin{equation}\label{1}
\int_X\psi\wedge\chi^{n-1}\geq \int_X \xi_{\rm ac}\wedge\chi^{n-1},
\end{equation}
and
\begin{equation}\label{2}
(\xi_{\rm ac})^n\geq f\omega^n,
\end{equation}
Lebesgue a.e. Then there is a closed positive current $R\in[\psi]$ with
\begin{equation}\label{3}
(R_{\rm ac})^n\geq f\omega^n,
\end{equation}
Lebesgue a.e.
\end{proposition}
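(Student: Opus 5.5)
The plan is to run Lamari's Hahn--Banach argument on a convex set of currents defined by the pointwise Monge--Amp\`ere lower bound, instead of the usual lower bound $T\geq \varepsilon\omega$. The first step is to rewrite condition \eqref{3} as a family of \emph{linear} inequalities that are stable under weak limits. For a nonnegative hermitian matrix $A$, computed with respect to $\omega$, we have the standard identity
\[
(\det A)^{1/n}=\inf\Big\{\tfrac1n \mathrm{tr}(AB)\ :\ B>0,\ \det B=1\Big\}.
\]
For a continuous field $B$ of positive definite endomorphisms with $\det B=1$, write $\beta_B$ for the associated positive $(n-1,n-1)$-form, normalized so that $T\wedge\beta_B=\tfrac1n\mathrm{tr}_\omega(TB)\,\omega^n$. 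Let $\mathcal{C}_f$ be the set of positive $(1,1)$-currents $T$ such that
\[
T\wedge\beta_B\geq f^{1/n}\omega^n
\]
as measures, for every such $B$.

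Since $T_{\rm sing}\wedge\beta_B\geq0$, a current with $(T_{\rm ac})^n\geq f\omega^n$ a.e.\ lies in $\mathcal{C}_f$. Conversely, if $T\in\mathcal{C}_f$, take Lebesgue densities of the inequalities over a countable dense family of $B$'s. This gives $T_{\rm ac}\wedge\beta_B\geq f^{1/n}\omega^n$ a.e.\ for all $B$, and hence $(T_{\rm ac})^n\geq f\omega^n$ a.e. Consequently $\mathcal{C}_f$ has three properties:
\begin{itemize}
\item it is convex (the defining inequalities are linear in $T$);
\item it is weakly closed (each inequality is tested against nonnegative continuous functions);
\item it is stable under adding positive currents.
\end{itemize}
Also $\xi\in$ hypothesis implies $\xi_{\rm ac}\in\mathcal{C}_f$.

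Next I replace $f$ by $(1-\varepsilon)f$ and aim to show that $\mathcal{C}_{(1-\varepsilon)f}$ meets the affine set $\mathcal{A}=\{\psi+\ddbar u\}$ of closed currents in $[\psi]$. Granting this for each $\varepsilon$, the resulting currents $R_\varepsilon$ are closed positive in the fixed class $[\psi]$, so their masses are bounded. A weak limit $R$ as $\varepsilon\to0$ is then closed, lies in $[\psi]$, and belongs to $\bigcap_\varepsilon\mathcal{C}_{(1-\varepsilon)f}=\mathcal{C}_f$ by weak closedness. This gives \eqref{3}.

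To prove the claim for fixed $\varepsilon$, argue by contradiction. Suppose $\mathcal{C}_{(1-\varepsilon)f}\cap\mathcal{A}=\emptyset$. I intend to separate the weakly closed convex set $\mathcal{C}_{(1-\varepsilon)f}$ from the closed affine set $\mathcal{A}$ by a smooth real $(n-1,n-1)$-form $\Theta$. To get a compact piece I would work with mass-normalized slices, as in Lamari's proof. The separation should give
\[
\int_X(\psi+\ddbar u)\wedge\Theta<\inf_{T\in\mathcal{C}_{(1-\varepsilon)f}}\int_X T\wedge\Theta
\]
for all $u$. Boundedness in $u$ forces $\ddbar\Theta=0$. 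Stability of $\mathcal{C}$ under adding positive currents forces $\Theta\geq0$, since otherwise the infimum is $-\infty$. Perturbing $\Theta$ by a small multiple of $\omega^{n-1}$, which is closed, keeps a strict inequality and makes $\Theta>0$. Michelsohn's $(n-1)$-th root then writes $\Theta=\chi^{n-1}$ with $\chi$ Gauduchon.

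The hypothesis applied to this $\chi$ produces $\xi$ with $\xi_{\rm ac}\in\mathcal{C}_f\subset\mathcal{C}_{(1-\varepsilon)f}$ and $\int\psi\wedge\chi^{n-1}\geq\int\xi_{\rm ac}\wedge\chi^{n-1}$. This contradicts the separation. The role of the slack $\varepsilon$ is to make the inclusion strict and to give room for the perturbation of $\Theta$.

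The main obstacle is the functional-analytic separation step. One must handle the noncompactness of $\mathcal{C}$ and of $\mathcal{A}$, and make sure the separating functional is a genuinely smooth form that can be perturbed to be strictly positive. I would first try to follow Lamari's setup closely: the dual of the Fr\'echet space of smooth forms, with $\mathcal{C}$ intersected with a mass-normalization hyperplane $\int T\wedge\omega^{n-1}=1$, which is weakly compact. The only new input over Lamari is then the weak closedness and convexity of $\mathcal{C}_f$ from the first step.
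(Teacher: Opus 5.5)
Your linearization $(\det A)^{1/n}=\inf\{\tfrac1n\mathrm{tr}(AB): B>0,\ \det B=1\}$ is correct. It is a clean substitute for what the paper imports from Boucksom, namely concavity of $T\mapsto((T_{\rm ac})^n/\omega^n)^{1/n}$ and lower semicontinuity of $(T_{\rm ac})^n$ under weak limits: it exhibits $\mathcal{C}_f$ as an intersection of weakly closed half-spaces (take $B$ smooth, so that $\phi\beta_B$ is a genuine test form). The slack $\varepsilon$ is harmless but unnecessary.

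The gap is in the step you yourself flag: the separation is asserted, not obtained, and the route you propose does not work. In the weak topology, two disjoint closed convex sets need not be strictly separable; you need compactness of one of them or closedness of their difference. Lamari's slice $\{\int T\wedge\omega^{n-1}=1\}$ is a device for \emph{cones}, where a slice generates the whole set. Your $\mathcal{C}_f$ is not a cone: it is stable under adding positive currents and under $T\mapsto tT$ for $t\geq 1$, but not for $t<1$. Separating a compact slice $\mathcal{K}=\mathcal{C}_f\cap\{\int T\wedge\omega^{n-1}=m\}$ (or $\leq M$) from $\mathcal{A}$ only gives $\int\psi\wedge\Theta<\int T\wedge\Theta$ for $T\in\mathcal{K}$, and your next deductions then fail.
\begin{itemize}
\item The current $T+tV$ leaves the slice, so nothing forces $\Theta\geq 0$. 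The only repair, adding $\lambda\omega^{n-1}$ with $\lambda$ large, preserves the inequality only on a slice of fixed mass.
\item The current $\xi_{\rm ac}$ supplied by the hypothesis has uncontrolled $\omega$-mass, so it need not lie in that slice. You cannot move it there without increasing $\int\xi_{\rm ac}\wedge\chi^{n-1}$, since inside $\mathcal{C}_f$ you may add positive currents but not scale down.
\end{itemize}

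The missing idea, which is the heart of the paper's proof, is to show that the difference set $\mathcal{C}_f-\mathcal{A}$ is weakly closed, and then to separate it strictly from the point $0$. Closedness uses that $\omega$ is closed, so every $S\in\mathcal{A}$ has $\int S\wedge\omega^{n-1}=\int\psi\wedge\omega^{n-1}$. Hence, if $T-S$ lies in the weak neighbourhood $\{U:\int U\wedge\omega^{n-1}<\int W\wedge\omega^{n-1}+1\}$ of a limit point $W$, then $T$ has $\omega$-mass at most a fixed $M$. Near $W$ the set $\mathcal{C}_f-\mathcal{A}$ is therefore contained in $\mathcal{K}_M-\mathcal{A}$, where $\mathcal{K}_M=\mathcal{C}_f\cap\{\int T\wedge\omega^{n-1}\leq M\}$ is weakly compact. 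A compact set minus a closed set is closed.

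The resulting smooth form $\sigma$ satisfies $\int(T-S)\wedge\sigma\geq c>0$ for \emph{all} $T\in\mathcal{C}_f$ and $S\in\mathcal{A}$. This is exactly what your deductions $\ddbar\sigma=0$ and $\sigma\geq 0$ require, as does the subsequent \emph{small} perturbation $\sigma+\varepsilon\omega^{n-1}$. Alternatively, you could homogenize. Take the closed convex cone formed by $\{(tT,t):t>0,\ T\in\mathcal{C}_f\}$ together with $\{(V,0):V\geq 0\}$; it has a compact base. Separating that base from the closed subspace $\{(s\psi+\ddbar u,s)\}$ gives the same conclusion. Either argument is needed; what you wrote supplies neither.
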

\begin{proof}
Let $\mathcal{C}$ be the set of real positive $(1,1)$-currents $T\geq 0$ which satisfy $(T_{\rm ac})^n\geq f\omega^n$ Lebesgue a.e., which is nonempty (it contains $A\omega$ for $A\gg 1$). The set $\mathcal{C}$ is convex, thanks to the linearity of the Lebesgue decomposition of real positive $(1,1)$-currents, and the concavity of the map
\begin{equation}
T(x)\mapsto \left(\frac{(T_{\rm ac})^n}{\omega^n}(x)\right)^{\frac{1}{n}},
\end{equation}
for Lebesgue a.e. $x$, which is proved in \cite[Proof of Prop. 4.5]{Bou}.

We claim that $\mathcal{C}$ is closed in the weak topology. Since all currents in $\mathcal{C}$ are positive, and the weak topology is metrizable when restricted to real positive $(1,1)$-currents (see e.g. \cite[p.174]{DemB}), it suffices to check that $\mathcal{C}$ is weakly sequentially closed. This in turn follows immediately from \cite[Prop. 2.1]{Bou}, where it is proved that if positive real $(1,1)$-currents $T_i$ converge weakly to $T$, then for Lebesgue a.e. $x\in X$ we have
\begin{equation}
T_{\rm ac}^n(x)\geq \liminf_{i\to\infty} (T_i)_{\rm ac}^n(x).
\end{equation}
Note also that if $T\in\mathcal{C}$ then $T_{\rm ac}\in\mathcal{C}$ too.

If we let $\mathcal{F}$ be the set of closed real $(1,1)$-currents of the form $\psi+\ddbar u$, where $u$ is a distribution, then the conclusion of the Proposition is precisely the statement that $\mathcal{C}\cap\mathcal{F}$ is nonempty. The affine subspace $\mathcal{F}$ is also closed in the weak topology, since by the $\de\db$-Lemma it is identified with the space of closed real $(1,1)$-currents in the class $[\psi]$ (see also \cite[p.149]{BHPV} for another proof).

Suppose for a contradiction that $\mathcal{C}\cap\mathcal{F}=\emptyset,$ and consider the difference set $\mathcal{D}:=\mathcal{C}-\mathcal{F}=\{T-S\ |\ T\in\mathcal{C}, S\in\mathcal{F}\}$. It is convex, and by assumption it does not contain $0$ (the zero current). It is also weakly closed, by the following argument. Pick an arbitrary current $W\in \ov{\mathcal{C}-\mathcal{F}}$, and let $\mathcal{U}$ be the set of real $(1,1)$-currents $U$ with $\int_X U\wedge\omega^{n-1}<\int_X W\wedge\omega^{n-1}+1$, which is an open neighborhood of $W$ in the weak topology. Given any current $U\in \mathcal{U}\cap(\mathcal{C}-\mathcal{F})$, write $U=T-S$ with $T\in\mathcal{C}, S\in\mathcal{F}$, and observe that the $\omega$-mass of $T$ satisfies
\begin{equation}
\int_X T\wedge\omega^{n-1}=\int_X U\wedge\omega^{n-1}+\int_X S\wedge\omega^{n-1}\leq \int_X W\wedge\omega^{n-1}+\int_X\psi\wedge\omega^{n-1}+1=:M,
\end{equation}
where $M$ is independent of $U$. Thus, $T$ belongs to the weakly compact set $\mathcal{K}$ of currents in $\mathcal{C}$ (which is weakly closed) with mass bounded above by $M$, and so $U$ belongs to the difference set $\mathcal{K}-\mathcal{F}$, which is weakly closed since it is the difference of a weakly compact and a weakly closed set (and the weak topology is Hausdorff). This discussion shows that $\mathcal{U}\cap(\mathcal{C}-\mathcal{F})\subset  \mathcal{K}-\mathcal{F}$. Returning to our current $W$, it thus belongs to
\begin{equation}
\mathcal{U}\cap(\ov{\mathcal{C}-\mathcal{F}})\subset \ov{\mathcal{U}\cap(\mathcal{C}-\mathcal{F})}\subset\ov{ \mathcal{K}-\mathcal{F}}= \mathcal{K}-\mathcal{F}\subset
 \mathcal{C}-\mathcal{F},
\end{equation}
where the first inclusion uses that $\mathcal{U}$ is open, and we are done.

We can then apply the Hahn-Banach strict separation theorem to separate $\mathcal{D}$ and $\{0\}$, and we obtain a smooth real $(n-1,n-1)$-form $\sigma$ and a constant $c>0$ such that
\begin{equation}\label{hbb}
\int_X(T-S)\wedge\sigma\geq c,
\end{equation}
for all $T\in\mathcal{C}$ and $S\in\mathcal{F}$. This implies that $\sigma$ is $\de\db$-closed and positive (in the usual sense of positivity as currents, which for smooth $(n-1,n-1)$-forms is equivalent to being semipositive definite). For the first property, take any $T\in\mathcal{C}$ and any distribution $u$. Then for any $t\in\mathbb{R}$ we have that $\psi+t\ddbar u\in\mathcal{F}$, and so
\begin{equation}
\int_X (T-\psi)\wedge\sigma-t\int_X\ddbar u\wedge \sigma\geq c,
\end{equation}
and since this holds for all $t\in\mathbb{R}$ this forces $\int_X\ddbar u\wedge \sigma=0$. Since this holds for $u$ arbitrary, this means that $\de\db\sigma=0$.  To prove that $\sigma$ is positive, we need to show that
$\int_X V\wedge\sigma\geq 0$ for any positive real $(1,1)$-current $V$. Pick any $T\in\mathcal{C}$ and $t\in\mathbb{R}_{\geq 0}$. Then $T+tV$ is a positive real $(1,1)$-current with $(T_{\rm ac}+tV_{\rm ac})^n\geq (T_{\rm ac})^n$ Lebesgue a.e., and so $T+tV\in \mathcal{C}$. Applying \eqref{hbb} with $S=\psi$ gives
\begin{equation}
\int_X(T-\psi)\wedge\sigma+t\int_X V\wedge\sigma\geq c,
\end{equation}
and since this holds for all $t\geq 0$, we must have $\int_X V\wedge\sigma\geq 0$ as desired.

For any $\ve>0$, $\sigma+\ve\omega^{n-1}$ is therefore a $\de\db$-closed and strictly positive definite real $(n-1,n-1)$-form, which for any $T\in\mathcal{C}$ satisfies
\begin{equation}\label{hbb2}\begin{split}
\int_X(T-\psi)\wedge(\sigma+\ve\omega^{n-1})&\geq c+\ve\int_X T\wedge\omega^{n-1}-\ve\int_X\psi\wedge\omega^{n-1}\\
&\geq c-\ve\int_X\psi\wedge\omega^{n-1}\geq\frac{c}{2}>0,
\end{split}\end{equation}
provided we choose $\ve$ sufficiently small (independently of $T$). Fix this value of $\ve$. Then, as observed in \cite{Mi}, there is a Gauduchon metric $\chi$ with $\chi^{n-1}=\sigma+\ve\omega^{n-1}.$ By our assumptions \eqref{1} and \eqref{2}, there is a current $\xi\in\mathcal{C}$ which also satisfies \eqref{1}, hence contrasting this with \eqref{hbb2} (with $T=\xi_{\rm ac}\in\mathcal{C}$) we get
\begin{equation}
\int_X \psi\wedge\chi^{n-1}\geq \int_X\xi_{\rm ac}\wedge\chi^{n-1}\geq \int_X \psi\wedge\chi^{n-1}+\frac{c}{2},
\end{equation}
which is a contradiction.
\end{proof}
\begin{remark}
Proposition \ref{hb}, formulated using Bott-Chern cohomology, remains true also when $X$ is a general compact complex manifold, with very minor changes to the proof (using a Gauduchon metric instead of a K\"ahler one). Since we will not need this, we leave this to the interested reader.
\end{remark}

\subsection{The main proof}
\begin{proof}[Proof of Theorem \ref{main}]
Up to adding a small multiple of $[\omega]$ to $[\alpha]$ and $[\beta]$, and using that the RHS of \eqref{vol} is continuous as we vary the cohomology classes, we may assume without loss that $[\alpha],[\beta]$ are K\"ahler classes, and we fix K\"ahler forms $\gamma\in [\alpha], \eta\in [\beta]$. Let
\begin{equation}\label{del}
\delta:=\frac{n\int_X\gamma^{n-1}\wedge\eta}{\int_X\gamma^n}>0.
\end{equation}
We may assume without loss that $\delta<1$, otherwise \eqref{vol} is trivially true. Applying Proposition \ref{conc} to $(X,\eta)$, we obtain a closed positive current $T\in[\beta]$ with $\mathrm{Supp}(T)\subset\Sigma$, where $\Sigma$ is a closed Lebesgue nullset.

Write $T=\eta+\ddbar\vp$ for some quasi-psh function $\vp$.
Since $[\beta]$ is K\"ahler, applying Demailly's regularization \cite{Dem}, specifically the version in \cite[Thm 1]{BK}, produces a sequence $\eta+\ddbar\vp_k$ of smooth closed semipositive real $(1,1)$-forms cohomologous to $[\beta]$, with $\vp_k\downarrow\vp$. Then
\begin{equation}
T_k:=\eta+\ddbar\left(\frac{\vp_k}{1+\frac{1}{k}}\right)\geq \frac{1}{k+1}\eta>0,
\end{equation}
is a sequence of K\"ahler metrics in the class $[\beta]$ which converge weakly to $T$.

Choose open subsets $\{U_j\}_{j=1}^\infty$ of $X$ with $\ov{U_{j+1}}\subset U_j$ for all $j\geq 1$, and $\Sigma=\bigcap_{j=1}^\infty U_j$, and pick smooth cutoff functions $0\leq \rho_j\leq 1$ with $\mathrm{Supp}(\rho_j)\subset U_j$ and $\rho_j|_{U_{j+1}}\equiv 1$.

Let $\chi$ be an arbitrary Gauduchon metric on $X$. Since the Radon measure $T\wedge \chi^{n-1}$ is supported on $\Sigma$, for every $j\geq 1$ we have
\begin{equation}
\int_X \rho_j T\wedge \chi^{n-1}=\int_X T\wedge\chi^{n-1}=\int_X \eta\wedge\chi^{n-1}>0.
\end{equation}
By weak convergence of $T_k\wedge\chi^{n-1}$ to $T\wedge\chi^{n-1}$, for each $j\geq 1$ we can find $k(j)$ such that
\begin{equation}\label{int1}
\int_X \rho_j T_{k(j)}\wedge \chi^{n-1}\geq \int_X \eta\wedge\chi^{n-1}-\frac{1}{j}.
\end{equation}
To simplify notation, we will write simply $T_j:=T_{k(j)}.$ Then \eqref{int1} implies in particular that
\begin{equation}\label{int2}
\int_{U_j} T_{j}\wedge \chi^{n-1}\geq \int_X \eta\wedge\chi^{n-1}-\frac{1}{j}.
\end{equation}
For $j$ sufficiently large, the RHS of \eqref{int2} is positive, hence
\begin{equation}\label{int2b}
\left(\int_{U_j} T_{j}\wedge \chi^{n-1}\right)^2\geq \left(\int_X \eta\wedge\chi^{n-1}\right)^2-\frac{C}{j},
\end{equation}
for some constant $C$ independent of $j$, which may change from line to line in the following.

For each $j\geq 1$, by the Calabi-Yau Theorem we can find a K\"ahler metric $\gamma_j\in[\alpha]$ that satisfies
\begin{equation}\label{ma}
\gamma_j^n=(1-\delta)\gamma^n+\delta\frac{\int_X\gamma^n}{\int_X \eta\wedge\chi^{n-1}}T_j\wedge\chi^{n-1}.
\end{equation}
Indeed, the RHS of \eqref{ma} integrates to $\int_X\gamma^n$.\\

\noindent
{\bf Key claim. }For $j$ sufficiently large we have
\begin{equation}\label{claim}
\int_{U_j}\gamma_j\wedge\chi^{n-1}\geq \int_X \eta\wedge\chi^{n-1}-\frac{C}{j},
\end{equation}
for some constant $C$ independent of $j$.\\

\noindent
The proof of the key claim follows the method of \cite[Appendix]{BDPP}, but crucially on the LHS of \eqref{claim} we are only integrating on the small open set $U_j$, while on the RHS we integrate on all of $X$. This is a reflection of the key property that $T$ is supported on the nullset $\Sigma$.

To prove the key claim, at each point $x\in X$, let $0<\lambda_1(x)\leq \cdots\leq\lambda_n(x)$ be the eigenvalues of $\gamma_j(x)$ with respect to $T_j(x)$, which are continuous functions of $x$. Then, dropping the point $x$ from the notation, we have the pointwise inequality $\gamma_j\geq \lambda_1 T_j$, while \eqref{ma} implies that
\begin{equation}
\prod_{p=1}^n\lambda_p\geq\delta\frac{\int_X\gamma^n}{\int_X \eta\wedge\chi^{n-1}}\frac{T_j\wedge\chi^{n-1}}{T_j^n},
\end{equation} so we can estimate
\begin{equation}\label{int3}
\int_{U_j}\gamma_j\wedge\chi^{n-1}\geq \int_{U_j}\lambda_1 T_j\wedge\chi^{n-1}\geq \delta\frac{\int_X\gamma^n}{\int_X \eta\wedge\chi^{n-1}}\int_{U_j}\frac{1}{\lambda_2\cdots\lambda_n}\left(\frac{T_j\wedge\chi^{n-1}}{T_j^n}\right)^2 T_j^n.
\end{equation}
To estimate the RHS of \eqref{int3} we use H\"older
\begin{equation}\label{int4}
\left(\int_{U_j}T_j\wedge\chi^{n-1}\right)^2\leq \left(\int_{U_j}\frac{1}{\lambda_2\cdots\lambda_n}\left(\frac{T_j\wedge\chi^{n-1}}{T_j^n}\right)^2 T_j^n\right)
\left(\int_{U_j}\lambda_2\cdots\lambda_n T_j^n\right),
\end{equation}
and the elementary inequality
\begin{equation}
\frac{n\gamma_j^{n-1}\wedge T_j}{T_j^n}=\sum_{i=1}^n\prod_{p\neq i}\lambda_p\geq \lambda_2\cdots\lambda_n,
\end{equation}
which after integration gives
\begin{equation}
\int_{U_j}\lambda_2\cdots\lambda_n T_j^n\leq n\int_{U_j}\gamma_j^{n-1}\wedge T_j\leq n\int_X\gamma_j^{n-1}\wedge T_j=n\int_X\gamma^{n-1}\wedge\eta.
\end{equation}
This, combined with \eqref{int3} and \eqref{int4}, and using also \eqref{del} and \eqref{int2b}, gives
\begin{equation}\begin{split}
\int_{U_j}\gamma_j\wedge\chi^{n-1}&\geq \delta\frac{\int_X\gamma^n}{\int_X \eta\wedge\chi^{n-1}}\left(\int_{U_j}T_j\wedge\chi^{n-1}\right)^2\frac{1}{n\int_X\gamma^{n-1}\wedge\eta}\\
&=\frac{1}{\int_X \eta\wedge\chi^{n-1}}\left(\int_{U_j}T_j\wedge\chi^{n-1}\right)^2\\
&\geq \int_X \eta\wedge\chi^{n-1}-\frac{C}{j},
\end{split}\end{equation}
for all $j$ large, where $C$ is independent of $j$. This proves the key claim.\\

\noindent
Next, by precompactness of currents in a fixed class, up to passing to a subsequence (that we will suppress from the notation) we have $\gamma_j\rightharpoonup \gamma_\infty,$ where $\gamma_\infty$ is a closed positive current in $[\alpha]$. Write $\gamma_\infty=(\gamma_\infty)_{\rm ac}+(\gamma_\infty)_{\rm sing}$ for its Lebesgue decomposition. For any given $j>k\geq 1$, thanks to the key claim \eqref{claim} we have
\begin{equation}
\int_X \rho_k \gamma_j\wedge\chi^{n-1}\geq \int_{U_j}\gamma_j\wedge\chi^{n-1}\geq  \int_X \eta\wedge\chi^{n-1}-\frac{C}{j},
\end{equation}
so letting $j\to\infty$ we get
\begin{equation}
\int_X \rho_k \gamma_\infty\wedge\chi^{n-1}\geq\int_X \eta\wedge\chi^{n-1}.
\end{equation}
Since $\rho_k$ converges pointwise to the characteristic function of $\Sigma$, we can let $k\to\infty$ and dominated convergence (with respect to the finite measure $\gamma_\infty\wedge\chi^{n-1}$) gives
\begin{equation}
\int_\Sigma \gamma_\infty\wedge\chi^{n-1}\geq\int_X \eta\wedge\chi^{n-1}.
\end{equation}
But since the positive measure $(\gamma_\infty)_{\rm ac}\wedge\chi^{n-1}$ is absolutely continuous with respect to Lebesgue, and $\Sigma$ is a nullset, this gives
\begin{equation}
\int_X (\gamma_\infty)_{\rm sing}\wedge\chi^{n-1}\geq \int_\Sigma (\gamma_\infty)_{\rm sing}\wedge\chi^{n-1}\geq\int_X \eta\wedge\chi^{n-1},
\end{equation}
and so
\begin{equation}\begin{split}\label{g1}
\int_X (\gamma-\eta)\wedge\chi^{n-1}&=\int_X\gamma_\infty\wedge\chi^{n-1}-\int_X\eta\wedge\chi^{n-1}\\
&=
\int_X (\gamma_\infty)_{\rm ac}\wedge\chi^{n-1}+\int_X (\gamma_\infty)_{\rm sing}\wedge\chi^{n-1}-\int_X\eta\wedge\chi^{n-1}\\
&\geq \int_X (\gamma_\infty)_{\rm ac}\wedge\chi^{n-1}.
\end{split}
\end{equation}
This is where the crucial ``subtraction of $[\beta]$'' happened.
On the other hand, \eqref{ma} implies that
\begin{equation}
\gamma_j^n\geq(1-\delta)\gamma^n,
\end{equation}
and \cite[Prop. 2.1]{Bou} gives that
\begin{equation}\label{g2}
(\gamma_\infty)_{\rm ac}^n\geq(1-\delta)\gamma^n,
\end{equation}
Lebesgue a.e. We can then feed \eqref{g1} and \eqref{g2} into Proposition \ref{hb} (taking $\psi=\gamma-\eta$, $f=(1-\delta)\frac{\gamma^n}{\omega^n}$, and for each given Gauduchon metric $\chi$ we take $\xi=\gamma_\infty$) to get a closed positive current $R\in[\alpha]-[\beta]$ with
\begin{equation}
(R_{\rm ac})^n\geq(1-\delta)\gamma^n.
\end{equation}
Thus,
\begin{equation}
\vol([\alpha]-[\beta])\geq \int_X(R_{\rm ac})^n\geq(1-\delta)\int_X\gamma^n=\int_X\alpha^n-n\int_X\alpha^{n-1}\wedge\beta,
\end{equation}
and we are done.
\end{proof}

\end{document}